\newtheorem{theorem}{Theorem}[section]
\newtheorem{proposition}{Proposition}
\newtheorem{definition}[theorem]{Definition}
\newtheorem{scheme}{Scheme}[section]
\newtheorem{proof}{Proof.}[section]
\newtheorem{remark}{Remark}[section]
\newcommand{\mg}{\mathcal{G}}
\newcommand{\mw}{\mathcal{W}}
\newcommand{\mn}{\mathcal{N}}
\newcommand{\mf}{\mathcal{F}}
\newcommand{\mr}{\mathcal{R}}
\newcommand{\mbn}{\bm{n}}
\newcommand{\mbr}{\bm{r}}
\newcommand{\mbh}{\bm{h}}
\newcommand{\bbR}{\mathbb{R}}
\newcommand{\hphi}{\hat{\phi}}
\newcommand{\bphi}{\bar{\phi}}
\newcommand{\tep}{\tilde{\varepsilon}}
\newcommand{\tal}{\tilde{\alpha}}
\newcommand{\inner}[1]{\left<#1\right>_{AP}}
\newcommand{\norm}[1]{\left\|#1\right\|^{2}_{AP}}
\newcommand\tbbint{{-\mkern -16mu\int}}
\newcommand\dbbint{{-\mkern -19mu\int}}
\newcommand\bbint{
	{\mathchoice{\dbbint}{\tbbint}{\tbbint}{\tbbint}}
}
\begin{document}

	\title
	{High-order energy stable schemes of incommensurate phase-field crystal model}

	\author{Kai Jiang and Wei Si}
	\date{}
	\maketitle

	\footnotetext{K. Jiang, Email: kaijiang@xtu.edu.cn; \\
		Wei Si, 201610111098@smail.xtu.edu.cn}

	{\footnotesize
		\centerline{School of Mathematics and Computational Science,}
		\centerline{Hunan Key Laboratory for Computation and Simulation in Science and Engineering,}
		\centerline{Xiangtan University, Xiangtan, Hunan, P.R. China, 411105.}
	} 

\begin{abstract}
This article focuses on the development of high-order energy stable schemes for the multi-length-scale 
incommensurate phase-field crystal model which is able to study the phase behavior of aperiodic structures. 
These high-order schemes based on the scalar auxiliary variable (SAV) and spectral 
deferred correction (SDC) approaches are suitable for the $L^2$ gradient flow 
equation, \textit{i.e.}, the Allen-Cahn dynamic equation. 
Concretely, we propose a second-order Crank-Nicolson (CN) scheme of the SAV system,
prove the energy dissipation law, and give the error estimate in the almost periodic
function sense. 
Moreover, we use the SDC method to improve the computational accuracy of the SAV/CN scheme. 
Numerical results demonstrate the advantages of high-order numerical methods in numerical computations 
and show the influence of length-scales on the formation of ordered structures.

\noindent{\it Primary}:

\noindent{\it Keywords}: Incommensurate phase-field crystal model, Scalar auxiliary variable method, 
Spectral deferred correction approach, Allen-Cahn equation, Energy dissipation law,
Error estimate.
\end{abstract}


\section{Introduction}
\label{sec:intro}

Aperiodic crystals, such as quasicrystals, are an important class of materials 
whose Fourier spectra cannot be all expressed by a set of basis vectors over the rational number field.
The irrational coefficients give rise to the denseness of Fourier spectra which 
results in the difficulties in the theoretical study.
Theoretically, a multiple characteristic length-scale model which possesses, at least, an
irrational scale, has been widely applied to study the formation and
thermodynamic stability of the aperiodic structures\,\cite{
Bak1985, Jaric1985, Lifshitz1997, Jiang2017, Savitz2018, Jiang2019}. 
The early model could trace back to Bak's work on three-dimensional
icosahedral quasicrystals. Since then, many related models have been proposed to study
aperiodic structures, including for multicomponent systems\,\cite{Jiang2019}. 
Among these models, Lifshitz and Petrich (LP) modified the
Swift-Hohenberg model and explicitly added an incommensurate two-length-scale
potential into a Lyapunov functional to explore quasiperiodic patterns that emerged in
Faraday experiments\,\cite{Lifshitz1997}. 
Recently, Savitz et al. extended the LP model from two-length-scale potential to multiple 
($\geq 3$) length-scale potential and studied more kinds of quasicrystals\,\cite{Savitz2018}. 
The $m$-length-scale model actually is an incommensurate multi-length-scale
phase-field-crystal (iPFC) model who owns a $4m$ ($m\in\mathbb{N}$) order
differential operator and nonlinear term in the energy functional.  
A high precision computation is helpful to study the phase behaviors of
aperiodic structures.
In this article, we will pay attention to the development of high-order numerical
methods for the iPFC model. 

Recently, various numerical methods have been proposed to solve phase-field equations 
including the convex splitting methods\,\cite{Wise2009}, 
the linear stabilized schemes\,\cite{Shen2010}, the invariant energy quadratization (IEQ)\,\cite{Yang2016} 
and the scalar auxiliary variable (SAV) approaches\,\cite{Shen2018, Li2019}.
The convex splitting method splits the energy functional into the convex and concave parts.
The method treats the convex part implicitly and the concave one explicitly to keep the unconditional 
energy stability. 
While the application of this method is restricted by the form of the energy
functional, such as double-well bulk energy. 
The linear stabilized scheme adds a penalty term to improve its stability and deals with 
the nonlinear terms explicitly for implementing it easily.
However, such a stabilized approach makes it difficult to design second-order unconditionally energy stable schemes.
Assume that the nonlinear part has a lower bound, via introducing an auxiliary variable, the IEQ method 
transforms the energy functional into a quadratic form to keep the unconditional energy dissipation property. 
Similarly, the SAV approach introduces a scalar auxiliary variable by supposing the bounded bulk energy and 
obtains an unconditionally energy stable system. 
Besides these methods, the spectral deferred correction (SDC)\,\cite{Dutt2000,Xu2019} algorithm is an efficient strategy 
to improve the accuracy of the above schemes. 
In the paper, we will apply the SAV approach to solve the time-dependent equations and further use the SDC strategy 
to improve the numerical accuracy. 

For aperiodic structures, two kinds of numerical methods, including 
the crystalline approximant method (CAM) and the projection method (PM) are
usually used to discretize the quasiperiodic functions\,\cite{Jiang2014}. 
The CAM uses a big periodic structure to approximate an aperiodic structure and 
corresponds to the Diophantine approximation problem which studies 
how to approximate irrational numbers by rational numbers\,\cite{Daven1946}.
To evaluate aperiodic structures accurately, the CAM needs an extremely big computational 
region with an unacceptable computational burden to reduce the error of Diophantine approximation. 
To avoid the Diophantine approximation problem, the PM accurately describes 
aperiodic structures based on the fact that the aperiodic structure can be regarded
as a periodic crystal in an appropriate higher-dimensional space.
The PM uses one higher-dimensional periodic region to capture the essential characteristics and 
greatly reduces computational complexity.

The rest of the paper is organized as follows.
In Section\,\ref{sec:model}, we first outline some useful preliminaries of the almost
periodic functions and then present the iPFC model.  
The energy dissipation principle of the $L^2$ gradient flow for the iPFC model in the
almost periodic sense is also given. 
In Section\,\ref{sec:theory}, we propose a second-order energy stable scheme for the
iPFC model and give the corresponding error estimate.
And we use the SDC approach to improve its computational accuracy efficiently. 
Section \ref{sec:results} presents the convergence rates of these numerical schemes and discusses the 
advantages of high-order numerical approaches in simulating dynamic evolution. 
Moreover, we also show the influence of multiple length-scales on the thermodynamic stability of 
aperiodic structures. 
There are some conclusions in Section\,\ref{sec:summary}.

\section{Problem formulation}
\label{sec:model}

\subsection{Preliminary}
\label{subsec:AP}


Aperiodic structures are space-filling phases without decay.
A useful mathematical theory to describe aperiodic structures is the almost periodic
function theory which is a generalization of continuous periodic functions. We
define the notation of a $d$-dimensional almost periodic function. 
\begin{definition}
	Let $f(\mbr)$ be a real-valued or complex-valued function defined on $\bbR^d$ and
	let $\epsilon > 0$. We say that $\zeta\in\bbR^d$ is an $\epsilon$-almost
	period of $f$ if 
	\begin{align*}
		| f(\mbr-\zeta) - f(\mbr)| < \epsilon, ~\mbox{~~for any ~} \mbr\in\bbR^d.
	\end{align*}
	A function $f$ is \textit{almost periodic} on $\bbR^d$ if it is continuous and if for
	every $\epsilon$ there exists a number $L=L(\epsilon, f)$ such that any
	cube with the side length of $L$ on $\bbR^d$ contains an $\epsilon$-almost period
	of $f$.
\end{definition}
The almost periodic $L^2$ inner product is defined by 
\begin{equation}
	\inner{f,g} = \lim_{R\to\infty} \frac{1}{|Q(R)|} \int_{Q(R)} f(\mbr)g(\mbr)\,d\mbr,
	\label{eq:definite.AP.inner}
\end{equation}
where $Q(R)=[-R,R]^d$ and $|Q(R)|$ is the measure of $Q(R)$. 
It is known that this inner product is well-defined\,\cite{Corduneanu1989}. 
We denote the corresponding norm by $\norm{\cdot} = \inner{\cdot,\cdot}$. 
For simplicity, we denote the average spacial integral over the whole space as 
\begin{equation}
	\bbint =  \lim_{R\to\infty} \frac{1}{|Q(R)|} \int_{Q(R)}.
\end{equation}
Some useful properties of $d$-dimensional almost periodic functions are presented as follows. 
\begin{proposition}
\label{thm:AP}
	~
	\begin{enumerate}[(1).]
		\item An almost periodic function is uniformly continuous and bounded.
		\item If $f(\mbr)$ and $g(\mbr)$ are almost periodic functions, then $f(\mbr) + g(\mbr)$ 
			and $f(\mbr) \cdot g(\mbr)$ are almost periodic functions.
	\end{enumerate}
\end{proposition}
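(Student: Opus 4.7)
The plan is to dispatch the two assertions in order, relying on the defining almost-period property and standard compactness arguments, and bootstrapping part (2) from the boundedness proved in part (1).

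For part (1), I would begin with boundedness. Choosing $\epsilon = 1$ in the definition yields a length $L = L(1, f)$ such that every cube of side $L$ in $\bbR^d$ contains a $1$-almost period of $f$. Given any $\mbr \in \bbR^d$, one can select such a $\zeta$ with $\mbr - \zeta$ lying in the compact cube $[-L, L]^d$. Continuity then gives a maximum $M_0$ of $|f|$ on that cube, so $|f(\mbr)| \leq |f(\mbr-\zeta)| + 1 \leq M_0 + 1$, uniformly in $\mbr$. For uniform continuity, I would fix $\epsilon > 0$, set $L = L(\epsilon/3, f)$, and use the uniform continuity of $f$ on the compact cube $[-2L, 2L]^d$ to obtain $\delta > 0$ such that $|f(\mbx) - f(\mbx')| < \epsilon/3$ whenever $\mbx, \mbx'$ lie in that cube with $|\mbx - \mbx'| < \delta$. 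For $\mbr_1, \mbr_2 \in \bbR^d$ satisfying $|\mbr_1 - \mbr_2| < \delta$, I would pick a single $\epsilon/3$-almost period $\zeta$ so that both translates $\mbr_i - \zeta$ sit inside the cube, and then split $|f(\mbr_1) - f(\mbr_2)|$ into three pieces via the intermediates $f(\mbr_i - \zeta)$, each controlled by $\epsilon/3$.

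For part (2), the sum is immediate once common almost periods are available: any $\epsilon/2$-almost period shared by $f$ and $g$ is at once an $\epsilon$-almost period of $f + g$. For the product, using the boundedness from part (1) I would set $M = \max(\sup_{\bbR^d} |f|,\, \sup_{\bbR^d} |g|)$, and a common $\epsilon/(2M)$-almost period $\zeta$ gives, by adding and subtracting, $|(fg)(\mbr - \zeta) - (fg)(\mbr)| \leq M\,|g(\mbr-\zeta) - g(\mbr)| + M\,|f(\mbr-\zeta) - f(\mbr)| \leq \epsilon$, so $\zeta$ is an $\epsilon$-almost period of $fg$.

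The main obstacle is justifying rigorously that two almost periodic functions admit a \emph{relatively dense} set of \emph{common} $\epsilon$-almost periods with a single length $L$ controlling their spacing; both arguments in part (2) collapse without this. I would handle it via Bochner's criterion: a continuous function is almost periodic if and only if the family of translates $\{f(\cdot + \tau) : \tau \in \bbR^d\}$ is relatively compact in the uniform norm on $\bbR^d$. Applying Bochner's characterization to the pair $(f, g)$ regarded as a vector-valued map $\bbR^d \to \bbR^2$ produces the required relatively dense set of common almost periods, closing the gap. This is the step that leans most heavily on the structure theory of almost periodic functions and is the principal technical hurdle of the proof.
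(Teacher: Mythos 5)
Your argument is sound and, in part (1), is exactly the classical Bohr argument transplanted to $\bbR^d$: boundedness via a $1$-almost period that moves the point into a fixed compact cube, and uniform continuity via the three-term split through a common $\epsilon/3$-almost period (you should just record the harmless restriction $\delta < L$ so that both translates $\mbr_1-\zeta$ and $\mbr_2-\zeta$ actually land in $[-2L,2L]^d$). The paper, by contrast, offers no argument at all: it simply remarks that the properties ``can be easily proven from the one-dimensional results'' and cites Corduneanu's book, so there is nothing to match your proof against line by line. Where the routes genuinely differ is in part (2). You correctly isolate the real difficulty --- the existence of a relatively dense set of \emph{common} $\epsilon$-almost periods of $f$ and $g$, without which both the sum and product arguments collapse --- and you resolve it by invoking Bochner's compactness criterion for the $\bbR^2$-valued map $(f,g)$. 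That is a legitimate and clean route, but note that the Bohr--Bochner equivalence is itself a nontrivial theorem (essentially the content of the reference the paper leans on), so your proof is complete only modulo that imported result; the alternative is Bohr's original, more elementary but more intricate, direct construction of common almost periods. What your write-up buys over the paper's one-line citation is a self-contained elementary proof of part (1) and an explicit identification of the single structural fact to which all of part (2) reduces.
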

These properties can be easily proven from the one-dimensional
results\,\cite{Corduneanu1989}.

\begin{theorem}
\label{thm:Green}
	If $f(\mbr)$ and $g(\mbr)$ are almost periodic functions and differentiable,
	then the Green's identity holds in the almost periodic sense, \textit{i.e.},
	\begin{equation}
		\left< f(\mbr), \nabla g(\mbr) \right>_{AP} = - \left< \nabla f(\mbr), g(\mbr) \right>_{AP}.
		\label{eq:Green}
	\end{equation}
\end{theorem}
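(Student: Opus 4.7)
The plan is to pass through the standard integration-by-parts identity on the finite cube $Q(R) = [-R,R]^d$ and then show that, after averaging by $1/|Q(R)|$, the boundary contribution is negligible while the two volume averages converge to the claimed almost periodic inner products. Since $\nabla g$ is vector-valued, it suffices to work componentwise, so I will establish $\inner{f, \partial_{r_i} g} = -\inner{\partial_{r_i} f, g}$ for each coordinate index $i$; the vector identity \eqref{eq:Green} follows by collecting components.

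Fix $R>0$. Classical integration by parts on the cube gives
\begin{equation*}
\int_{Q(R)} f\, \partial_{r_i} g \, d\mbr \;+\; \int_{Q(R)} (\partial_{r_i} f)\, g \, d\mbr \;=\; \int_{\partial Q(R)} f g\, \nu_i \, dS,
\end{equation*}
where $\nu_i$ denotes the $i$th component of the outward unit normal to $\partial Q(R)$, supported only on the two faces perpendicular to the $r_i$-axis. By Proposition~\ref{thm:AP}, the almost periodic functions $f$ and $g$ are bounded by some constant $M$, and these two faces have total $(d-1)$-dimensional measure $2(2R)^{d-1}$, so
\begin{equation*}
\left| \frac{1}{|Q(R)|} \int_{\partial Q(R)} f g\, \nu_i \, dS \right| \;\leq\; \frac{2 M^{2} (2R)^{d-1}}{(2R)^{d}} \;=\; \frac{M^{2}}{R} \;\longrightarrow\; 0
\end{equation*}
as $R \to \infty$. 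Dividing the integration-by-parts identity by $|Q(R)|$ and passing to the limit then collapses the boundary term and yields \eqref{eq:Green}.

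The main obstacle is ensuring that both sides of \eqref{eq:Green} are actually well-defined, that is, that the two limits $\lim_{R\to\infty}|Q(R)|^{-1}\int_{Q(R)} f\, \partial_{r_i} g\, d\mbr$ and $\lim_{R\to\infty}|Q(R)|^{-1}\int_{Q(R)} (\partial_{r_i} f)\, g \, d\mbr$ exist. A priori, the derivative of an almost periodic function need not be almost periodic, since by Bochner's criterion one additionally needs $\partial_{r_i} f$ to be uniformly continuous. The hypothesis that $f$ and $g$ are \emph{differentiable} must therefore be read in the stronger sense that places $\partial_{r_i} f$ and $\partial_{r_i} g$ in the almost periodic class, which is the natural regularity setting for the smooth density fields arising in the iPFC model. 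Under that convention, Proposition~\ref{thm:AP}(2) implies that the integrands $f\, \partial_{r_i} g$ and $(\partial_{r_i} f)\, g$ are themselves almost periodic, so their mean values exist and the limit argument above is justified.
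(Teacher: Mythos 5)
Your proof is correct and follows essentially the same route as the paper: integration by parts over the finite cube $Q(R)$, a bound of the form $M^{2}(2R)^{d-1}/(2R)^{d}\to 0$ on the averaged boundary term, and passage to the limit. The only differences are cosmetic (you argue componentwise where the paper bounds the full flux integral at once) plus your added remark that the hypothesis must place $\nabla f$ and $\nabla g$ in the almost periodic class so that both mean values in \eqref{eq:Green} exist --- a point the paper's proof leaves implicit, and a worthwhile clarification.
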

\begin{proof}
	Since $f(\mbr)$ and $g(\mbr)$ are almost periodic functions, then
	there exists a constant $M$ such that $\sup\limits_{\mbr} \left\{
	\left|f(\mbr)\right|, \left|g(\mbr)\right| \right\} \leq M$. 
	Denote 
	\begin{equation}
		b_{R} = \lim_{R\to \infty} \frac{1}{|Q(R)|} \int_{\partial Q(R)} f(\mbr) \mbn
		\cdot g(\mbr)\,ds,
	\end{equation}
	where $\mbn$ is the outward normal of $\partial Q(R)$. 
	In the $d$-dimensional space, we have  
	\begin{equation}
		|b_{R}| \leq \lim_{R\to \infty} \frac{M^{2} 2d(2R)^{d-1}}{(2R)^d} = 0. 
	\end{equation}
	Therefore, we obtain the desired conclusion by 
	\begin{equation}
		\begin{aligned}
			\left< f(\mbr), \nabla g(\mbr) \right>_{AP}
			& = \bbint f(\mbr) \nabla g(\mbr)\,d\mbr
			\\
			& = - \bbint \nabla f(\mbr) g(\mbr)\,d\mbr
			= - \left< \nabla f(\mbr), g(\mbr) \right>_{AP}.
		\end{aligned}
		\label{eq:Green.deduce}
	\end{equation}
\end{proof}
%

\subsection{Incommensurate phase-field crystal (iPFC) model}
\label{subsec:iPFC}

The simplest iPFC model may be the LP model which was originally
proposed to study the bi-frequency excited Faraday wave\,\cite{Lifshitz1997}. 
Concretely, the free energy functional of the LP model can be written as 
\begin{equation}
	F_{LP}[\psi(\mbr)] = \bbint \left\{ \frac{c}{2} \left[ (\Delta+1)
	(\Delta+q^{2}) \psi \right]^{2}
	+ \left( \frac{\varepsilon}{2} \psi^{2} - \frac{\alpha}{3} \psi^{3} +
	\frac{1}{4} \psi^{4} \right) \right\}\, d\mbr,
\end{equation}
where $q$ is an irrational number depending on the property of quasiperiodic structures.
The essential feature of the energy functional is the existence of two
characteristic length-scales, $1$ and $q$, which is a critical factor to
stabilize quasi-periodic structures. 
The LP model has been also used to study the soft-matter quasicrystals\,\cite{lifshitz2007}. 
However, this model could be able to globally stabilize the dodecagonal and decagonal rotational 
quasicrystals\,\cite{Jiang2015PRE}. 
To generalize the iPFC model, Savitz et al. extended the interaction potential from two 
characteristic length-scales to multiple characteristic length-scales. 
In particular, the Lyapunov functional of an $m$-length-scale incommensurate system can be written as
\begin{equation}
	\begin{aligned}
		F[\psi(\mbr)] = \bbint \Bigg\{ \frac{c}{2} \Big[\prod_{j=1}^m
		(\Delta+q_j^2) \psi \Big]^2
		+ \Big(\frac{\varepsilon}{2}\psi^2 - \frac{\alpha}{3}\psi^3 +
		\frac{1}{4}\psi^4 \Big) \Bigg\} \,d\mbr,
	\end{aligned}
    \label{eq:energy.origin.L2}
\end{equation}
where $\psi(\mbr)$, $\mbr\in\mathbb{R}^d$ $(d=1,2,3)$, is the order parameter corresponding to the 
density profile of the system.
$q_{j}$ is the $j$-th characteristic length-scale which depends on the property of
aperiodic structures.
$c$ is a positive model parameter to ensure that the principle wavelengths are near the critical wavelengths. 
$\varepsilon$ and $\alpha$ are both model parameters related to physical conditions, such as 
temperature, pressure.
To reduce the number of model parameters, the iPFC energy functional can be rescaled by defining  
$F = c^{2} \mf$, $\psi(\mbr) = \sqrt{c} \phi(\mbr)$, $\tep = c \varepsilon$, and $\tal = \sqrt{c} \alpha$.
The rescaled functional is 
\begin{equation}
	\begin{aligned}
		\mf[\phi(\mbr)] = \bbint \Big\{ \frac{1}{2} [\mg\phi]^2 + \mn(\phi) \Big\} \,d\mbr
		 = \frac{1}{2} \|\mg\phi\|_{AP}^{2} + \left< \mn(\phi), 1 \right>_{AP},
	\end{aligned}
	\label{eq:energy.L2}
\end{equation}
where 
\begin{equation}
	\mg = \prod_{j=1}^m (\Delta+q_j^2),
	\quad \quad	
	\mn(\phi) = \frac{\tep}{2}\phi^2 - \frac{\tal}{3}\phi^3 + \frac{1}{4}\phi^4. 
\end{equation}

To solve the iPFC free energy functional, we consider the following Allen-Cahn dynamic equation 
\begin{equation}
    \begin{aligned}
		\phi_{t} & = - \mw(\phi),
        \\
        \mw(\phi) & := \frac{\delta \mf}{\delta \phi} = \mg^2\phi + \mn'(\phi).
    \end{aligned}
    \label{eq:partial}
\end{equation}
The initial value is $\phi|_{t=0} = \phi_{0}$.
From Theorem \ref{thm:Green}, we can prove that the system \eqref{eq:partial}
satisfies the following energy dissipation law 
\begin{equation}
	\frac{d\mf(\phi)}{dt} 
	= \left< \frac{\delta \mf}{\delta\phi}, \phi_{t} \right>_{AP} 
	= - \norm{\mw(\phi)} \leq 0.
	\label{eq:energy}
\end{equation}
Then we impose the following mean zero constraint of order parameter on
the iPFC model to ensure the mass conservation
\begin{equation}
	\bbint \phi(\mbr)\, d\mbr = 0 .
	\label{eq:mass.conserve}
\end{equation}

\section{Numerical methods}
\label{sec:theory}

In this section, we propose the second-order unconditionally energy stable
Crank-Nicolson scheme based on the SAV technique, and also give the error analysis.
Further, we use the SDC strategy to improve the temporal accuracy of the
second-order scheme. 

\subsection{The scalar auxiliary variable (SAV) approach}
\label{subsec:SAV}

Let's suppose that $F_{1}(\phi) = \inner{\mn(\phi),1} + C_{1} \geq 0$, where $C_{1}$ is a positive constant. 
We introduce a scalar auxiliary variable $\mr = \sqrt{F_{1}(\phi)}$ to transform \eqref{eq:partial} 
into an equivalent system as 
\begin{subequations}
    \begin{align}
		\phi_{t} & = - \mw(\phi),
    	\label{eq:partial.SAV.a}
        \\
        \mw(\phi) & = \mg^2 \phi + \frac{\mr}{\sqrt{F_{1}(\phi)}} \mn'(\phi),
    	\label{eq:partial.SAV.b}
        \\
		\mr_{t} & = \inner{ \frac{\mn'(\phi)}{2\sqrt{F_{1}(\phi)}}, \phi_{t} }.
    	\label{eq:partial.SAV.c}
    \end{align}
    \label{eq:partial.SAV}
\end{subequations}
By taking the almost periodic inner products of \eqref{eq:partial.SAV.a} with $\mw$, and 
\eqref{eq:partial.SAV.b} with $-\phi_{t}$, multiplying \eqref{eq:partial.SAV.c} 
with $2\mr$ and adding them together, we have the following energy dissipation property 
\begin{equation}
	\frac{d}{dt} \left( \frac{1}{2} \norm{\mg\phi} + \mr^{2} - C_{1} \right) = -\norm{\mw} \leq 0.
	\label{eq:energy.SAV}
\end{equation}

%
The SAV approach can construct high-order unconditionally energy stable schemes. 
In this section, we discuss a second-order semi-discrete scheme based on 
the Crank-Nicolson method.
Suppose the time interval $[0,T]$ is divided into $N_{T}$ non-overlapping subintervals by the partition 
$0=t^{0}<t^{1}<\cdots<t^{n}<\cdots<t^{N_{T}}=T$. 
The time step size is $\tau^{n} = t^{n+1} - t^{n}$. 
We denote $t^{n}+\tau^{n}/2$ as $t^{n+1/2}$. 
Let $\phi^{n} = \phi(t^{n})$ and $\mr^{n} = \mr(t^{n})$. 
\begin{scheme}[SAV/CN]
	\label{scheme:SAV.CN}
	For $n \geq 1$, given $\phi^{n}$, $\mr^{n}$ and $\phi^{n-1}$, $\mr^{n-1}$, we update $\phi^{n+1}$ 
	and $\mr^{n+1}$ by 
	\begin{subequations}
		\begin{align}
		\frac{\phi^{n+1}-\phi^{n}}{\tau^{n}} & = - \mw^{n+1/2},
		\label{eq:SAV.CN.a}
		\\
		\mw^{n+1/2} & = \mg^2\phi^{n+1/2} + \frac{\mr^{n+1/2}}{\sqrt{F_{1}(\bphi^{n+1/2})}} \mn'(\bphi^{n+1/2}),
		\label{eq:SAV.CN.b}
		\\
		\frac{\mr^{n+1}-\mr^{n}}{\tau^{n}} 
		& = \inner{\frac{\mn'(\bphi^{n+1/2})}{2\sqrt{F_{1}(\bphi^{n+1/2})}}, \frac{\phi^{n+1}-\phi^{n}}{\tau^{n}} },
		\label{eq:SAV.CN.c}
		\end{align}
		\label{eq:SAV.CN}
	\end{subequations}
	where $\mr^{n+1/2} = (\mr^{n+1}+\mr^{n})/2$, $\phi^{n+1/2} = (\phi^{n+1}+\phi^{n})/2$ 
	and $\bphi^{n+1/2} = (3\phi^{n}-\phi^{n-1})/2$.
\end{scheme}

\begin{theorem}
	\label{thm:SAV.CN.energy}
	The SAV/CN scheme satisfies the following energy dissipation mechanism 
	\begin{equation}
		\mf_{SAV/CN}^{n+1} - \mf_{SAV/CN}^{n} \leq 0,
	\end{equation}
	where
	\begin{equation}
		\mf_{SAV/CN}^{n} = \frac{1}{2} \norm{\mg\phi^{n}} + (\mr^{n})^{2} - C_{1}.
	\end{equation}
\end{theorem}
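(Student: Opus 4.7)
The strategy is to mimic, step by step, the continuous derivation of \eqref{eq:energy.SAV} at the semi-discrete level, so that testing the three equations of Scheme \ref{scheme:SAV.CN} against the right quantities and summing produces a telescoping identity whose left-hand side is exactly $\mf_{SAV/CN}^{n+1}-\mf_{SAV/CN}^{n}$ and whose right-hand side is manifestly nonpositive.

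First, I would take the almost-periodic inner product of \eqref{eq:SAV.CN.a} with $\tau^{n}\mw^{n+1/2}$, obtaining
\begin{equation}
\inner{\phi^{n+1}-\phi^{n},\,\mw^{n+1/2}} \;=\; -\tau^{n}\,\norm{\mw^{n+1/2}}.
\end{equation}
Next, I would pair \eqref{eq:SAV.CN.b} with $\phi^{n+1}-\phi^{n}$. For the linear term, each factor $\Delta+q_{j}^{2}$ of $\mg$ is formally self-adjoint in the AP sense, because Theorem \ref{thm:Green} applied twice to $\nabla$ yields $\inner{\Delta f,g}=\inner{f,\Delta g}$; iterating gives $\inner{\mg^{2}u,v}=\inner{\mg u,\mg v}$. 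Combined with $\phi^{n+1/2}=(\phi^{n+1}+\phi^{n})/2$ and the scalar identity $(a+b)(a-b)=a^{2}-b^{2}$, this yields
\begin{equation}
\inner{\mg^{2}\phi^{n+1/2},\,\phi^{n+1}-\phi^{n}} \;=\; \tfrac{1}{2}\bigl(\norm{\mg\phi^{n+1}}-\norm{\mg\phi^{n}}\bigr).
\end{equation}

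For the nonlinear part of \eqref{eq:SAV.CN.b}, the scalar $\mr^{n+1/2}$ factors out of the inner product, and I would substitute the rearranged form of \eqref{eq:SAV.CN.c},
\begin{equation}
\inner{\frac{\mn'(\bphi^{n+1/2})}{\sqrt{F_{1}(\bphi^{n+1/2})}},\,\phi^{n+1}-\phi^{n}} \;=\; 2\tau^{n}\,\frac{\mr^{n+1}-\mr^{n}}{\tau^{n}} \cdot \tau^{n} \cdot \tau^{n-1} \ldots
\end{equation}
more cleanly written as $2(\mr^{n+1}-\mr^{n})$, and then combine it with the telescoping identity $2\mr^{n+1/2}(\mr^{n+1}-\mr^{n})=(\mr^{n+1})^{2}-(\mr^{n})^{2}$ to conclude that this contribution equals $(\mr^{n+1})^{2}-(\mr^{n})^{2}$.

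Equating the two expressions obtained for $\inner{\mw^{n+1/2},\phi^{n+1}-\phi^{n}}$ then delivers
\begin{equation}
\tfrac{1}{2}\bigl(\norm{\mg\phi^{n+1}}-\norm{\mg\phi^{n}}\bigr)+(\mr^{n+1})^{2}-(\mr^{n})^{2} \;=\; -\tau^{n}\norm{\mw^{n+1/2}} \;\leq\; 0,
\end{equation}
which, by the definition of $\mf_{SAV/CN}^{n}$, is exactly the claimed inequality. The only delicate point is the self-adjointness of $\mg$ and the associated repeated integration-by-parts on $\bbR^{d}$; this reduces to iterating Theorem \ref{thm:Green}, whose boundary terms vanish thanks to the $(2R)^{d-1}/(2R)^{d}$ surface-to-volume decay already exploited in its proof. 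Everything else is algebraic telescoping that is built into the Crank-Nicolson midpoint structure, and the argument is independent of the choice of $C_{1}$ since it cancels when taking differences.
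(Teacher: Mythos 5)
Your proposal is correct and follows essentially the same route as the paper's proof: test \eqref{eq:SAV.CN.a} with $\tau^{n}\mw^{n+1/2}$, test \eqref{eq:SAV.CN.b} with $\phi^{n+1}-\phi^{n}$, use \eqref{eq:SAV.CN.c} multiplied through by $2\mr^{n+1/2}$ to cancel the nonlinear term, and telescope via the midpoint identities (you merely make explicit the self-adjointness $\inner{\mg^{2}u,v}=\inner{\mg u,\mg v}$ that the paper uses silently, and you correctly retain the factor $\tau^{n}$ in $-\tau^{n}\norm{\mw^{n+1/2}}$). The garbled intermediate display ``$2\tau^{n}\,(\mr^{n+1}-\mr^{n})/\tau^{n}\cdot\tau^{n}\cdot\tau^{n-1}\ldots$'' should be deleted, since the clean identity $\inner{\mn'(\bphi^{n+1/2})/\sqrt{F_{1}(\bphi^{n+1/2})},\,\phi^{n+1}-\phi^{n}}=2(\mr^{n+1}-\mr^{n})$ that you state immediately afterwards is the one actually needed and is correct.
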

\begin{proof}
	We take the almost periodic inner products of \eqref{eq:SAV.CN.a} with $\tau^{n}\mw^{n+1/2}$, 
	and \eqref{eq:SAV.CN.b} with $-(\phi^{n+1}-\phi^{n})$. 
	\begin{equation}
		\inner{\phi^{n+1}-\phi^{n},\mw^{n+1/2}} = -\norm{\mw^{n+1/2}},
		\label{eq:SAV.CN.a.law}
	\end{equation}
	\begin{equation}
		\begin{aligned}
			-\inner{\phi^{n+1}-\phi^{n},\mw^{n+1/2}} & = -\inner{\phi^{n+1}-\phi^{n},\mg^2\phi^{n+1/2}} 
			\\
			& -\inner{\phi^{n+1}-\phi^{n},\frac{\mr^{n+1/2}}{\sqrt{F_{1}(\bphi^{n+1/2})}} \mn'(\bphi^{n+1/2})}.
		\end{aligned}
		\label{eq:SAV.CN.b.law}
	\end{equation}
	Multiplying \eqref{eq:SAV.CN.c} with $2\mr^{n+1/2}$ yields 
	\begin{equation}
		\begin{aligned}
			2\mr^{n+1/2}(\mr^{n+1}-\mr^{n}) 
			= \inner{\frac{\mr^{n+1/2}}{\sqrt{F_{1}(\bphi^{n+1/2})}}\mn'(\bphi^{n+1/2}), \phi^{n+1}-\phi^{n} }. 
		\end{aligned}
		\label{eq:SAV.CN.c.law}
	\end{equation}
	Adding \eqref{eq:SAV.CN.a.law}, \eqref{eq:SAV.CN.b.law} and \eqref{eq:SAV.CN.c.law} together, we obtain 
	\begin{equation}
		2\mr^{n+1/2}(\mr^{n+1}-\mr^{n}) = -\inner{\phi^{n+1}-\phi^{n},\mg^2\phi^{n+1/2}} - \norm{\mw^{n+1/2}}. 
		\label{eq:SAV.CN.abc.law}
	\end{equation}
	Since $\mr^{n+1/2} = (\mr^{n+1}+\mr^{n})/2$ and $\phi^{n+1/2} = (\phi^{n+1}+\phi^{n})/2$, 
	\eqref{eq:SAV.CN.abc.law} can be recast as 
	\begin{equation}
		\frac{1}{2} \left( \norm{\mg\phi^{n+1}} - \norm{\mg\phi^{n}} \right) 
		+ (\mr^{n+1})^{2}-(\mr^{n})^{2} = -\norm{\mw^{n+1/2}} \leq 0. 
		\label{eq:SAV.CN.abc.law.re}
	\end{equation}
	The desired conclusion is obtained from the above equation. 
\end{proof}

\begin{remark}
	It is noted that the modified energy $\mf_{SAV/CN}^{n}$ is different from the
	original energy $\mf(\phi^{n})$ since $\mr^{n}$ is obtained from the iteration process.
\end{remark}

\begin{remark}[The implementation of the SAV/CN scheme]
\label{remark:SAV.CN.implement}
	Denote 
	\begin{equation}
		u(t^{n+1/2}) =	\frac{\mn'(\phi(t^{n+1/2}))}{\sqrt{F_{1}(\phi(t^{n+1/2}))}}, 
		\quad \quad
		u^{n+1/2} =	\frac{\mn'(\bphi^{n+1/2})}{\sqrt{F_{1}(\bphi{n+1/2})}}.
        \label{eq:SAV.CN.note.b}
	\end{equation}
	Substituting \eqref{eq:SAV.CN.b} and \eqref{eq:SAV.CN.c} into
	\eqref{eq:SAV.CN.a}, we obtain
    \begin{equation}
        \frac{\phi^{n+1}-\phi^{n}}{\tau^{n}} = - \left[ \mg^2 \phi^{n+1/2} 
			+ u^{n+1/2} \left( \mr^{n} + \frac{1}{4} \inner{u^{n+1/2}, \phi^{n+1}-\phi^{n}} \right) \right].
		\label{eq:SAV.CN.phi.b}
    \end{equation}
	Eqn.\,\eqref{eq:SAV.CN.phi.b} can be rewritten as 
    \begin{equation}
    	\begin{aligned}
    		& \quad (I + \frac{\tau^{n}}{2}\mg^2) \phi^{n+1} 
				+ \frac{\tau^{n}}{4} u^{n+1/2} \inner{u^{n+1/2}, \phi^{n+1}} 
    		\\
    		& = (I - \frac{\tau^{n}}{2}\mg^2) \phi^{n} - \tau^{n} \mr^{n} u^{n+1/2} 
				+ \frac{\tau^{n}}{4} \inner{u^{n+1/2}, \phi^{n}} u^{n+1/2}.
    	\end{aligned}
        \label{eq:SAV.CN.mark.1}
    \end{equation}
	Taking the almost periodic inner product with $(I+\frac{1}{2}\tau^{n}\mg^2)^{-1} u^{n+1/2}$ leads to 
    \begin{equation}
        \inner{u^{n+1/2}, \phi^{n+1}} + \frac{\tau^{n}}{4} \gamma^{n} \inner{u^{n+1/2}, \phi^{n+1}}
        = \inner{u^{n+1/2}, (I + \frac{\tau^{n}}{2}\mg^2)^{-1} c^{n}},
        \label{eq:SAV.CN.uphi}
    \end{equation}
    where 
    \begin{align}
    	\gamma^{n} & = \inner{u^{n+1/2}, (I+\frac{\tau^{n}}{2}\mg^2)^{-1} u^{n+1/2}},
    	\\
    	c^{n} & = (I - \frac{\tau^{n}}{2}\mg^2) \phi^{n} - \tau^{n} \mr^{n} u^{n+1/2} 
			+ \frac{\tau^{n}}{4} \inner{u^{n+1/2}, \phi^{n}} u^{n+1/2}.
    \end{align}
	From \eqref{eq:SAV.CN.uphi}, we get 
    \begin{equation}
        \inner{u^{n+1/2}, \phi^{n+1}} 
			= \frac{\inner{u^{n+1/2}, (I + \frac{1}{2} \tau^{n} \mg^2)^{-1} c^{n}}}{I + \frac{1}{4} \tau^{n} \gamma^{n}}.
		\label{eq:SAV.CN.mark.2}
    \end{equation}
	Then we can directly calculate $\phi^{n+1}$ via \eqref{eq:SAV.CN.mark.1} and \eqref{eq:SAV.CN.mark.2}. 
\end{remark}

\subsection{Error estimate}
\label{subsec:error}

In this section, we will derive the error estimate of the SAV/CN scheme\,\ref{scheme:SAV.CN}.
Denote $e^{n} = \phi^{n} - \phi(t^{n})$, $w^{n+1} = \mw^{n+1} - \mw(t^{n+1})$,
and $r^{n} = \mr^{n} - \mr(t^{n})$, we have 
\begin{theorem}
	\label{thm:SAV.CN.AC.err}
	For the Allen-Cahn dynamic equation, assume that $\phi^{0}$ is almost periodic and 
	$\norm{ \phi_{t} }$ is bounded. 
	Considering a uniform time partition, \textit{i.e.}, $\tau=\tau^{k}, k \leq N_{T}$,  we have
	\begin{equation}
		\norm{\mg e^{k}} + (r^{k})^{2}
		\leq C \tau^{4} \int_{0}^{t^{k}} \left( \norm{\phi_{ttt}(s)} 
			+ |r_{ttt}(s)|^{2} \right) ds,
	\end{equation}
	where the constant $C$ is independent on $\tau$.
\end{theorem}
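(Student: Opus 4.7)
The plan is to run the discrete-energy argument of Theorem~\ref{thm:SAV.CN.energy} at the level of errors. First I would derive consistency equations by evaluating $(\phi,\mw,\mr)$ at the grid nodes. Taylor expansion about $t^{n+1/2}$ with integral remainder gives
\begin{align*}
	\frac{\phi(t^{n+1})-\phi(t^{n})}{\tau} &= -\mw(t^{n+1/2}) + T_{1}^{n+1/2}, \\
	\frac{\mr(t^{n+1})-\mr(t^{n})}{\tau} &= \inner{\frac{\mn'(\phi(t^{n+1/2}))}{2\sqrt{F_{1}(\phi(t^{n+1/2}))}},\,\phi_{t}(t^{n+1/2})} + T_{3}^{n+1/2},
\end{align*}
together with the extrapolation defect $(3\phi(t^{n})-\phi(t^{n-1}))/2-\phi(t^{n+1/2}) = T_{2}^{n+1/2}$. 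Each $T_{j}^{n+1/2}$ is of order $\tau^{2}$, and by Cauchy--Schwarz its squared $AP$ norm is dominated by $\tau^{3}\int_{t^{n-1}}^{t^{n+1}}\bigl(\norm{\phi_{ttt}(s)}+|\mr_{ttt}(s)|^{2}\bigr)\,ds$. Subtracting Scheme~\ref{scheme:SAV.CN} from these consistency equations produces coupled error equations for $(e^{n},w^{n+1/2},r^{n})$.

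Next, mimicking the proof of Theorem~\ref{thm:SAV.CN.energy} line by line, I would take the $AP$ inner product of the $\phi$-error equation with $\tau w^{n+1/2}$, pair the $\mw$-error equation with $-(e^{n+1}-e^{n})$, multiply the $\mr$-error equation by $2\tau r^{n+1/2}$, and sum. The linear contributions telescope exactly as in that proof, leaving
\begin{equation*}
	\tfrac{1}{2}\bigl(\norm{\mg e^{n+1}}-\norm{\mg e^{n}}\bigr) + (r^{n+1})^{2}-(r^{n})^{2} + \tau\norm{w^{n+1/2}} = \mathcal{I}_{\mathrm{nl}}^{n+1/2} + \mathcal{I}_{\mathrm{tr}}^{n+1/2},
\end{equation*}
where $\mathcal{I}_{\mathrm{tr}}^{n+1/2}$ collects inner products of the $T_{j}^{n+1/2}$'s with $w^{n+1/2}$, $(e^{n+1}-e^{n})$, and $r^{n+1/2}$, while $\mathcal{I}_{\mathrm{nl}}^{n+1/2}$ is the residual from replacing $\mr(t^{n+1/2})\,\mn'(\phi(t^{n+1/2}))/\sqrt{F_{1}(\phi(t^{n+1/2}))}$ by $\mr^{n+1/2}\,\mn'(\bphi^{n+1/2})/\sqrt{F_{1}(\bphi^{n+1/2})}$ in the discretised flux.

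The main obstacle will be controlling $\mathcal{I}_{\mathrm{nl}}^{n+1/2}$. I would decompose the nonlinear mismatch into a piece linear in $r^{n+1/2}$ with bounded coefficient $\mn'(\bphi^{n+1/2})/\sqrt{F_{1}(\bphi^{n+1/2})}$, plus a Lipschitz-difference piece weighted by $\mr(t^{n+1/2})$ whose squared $AP$ norm is controlled by $C\bigl(\norm{e^{n}}+\norm{e^{n-1}}+\tau^{4}\bigr)$. This step hinges on uniform $L^{\infty}$ bounds for $\phi(t)$ and $\bphi^{n+1/2}$: the former follows from Proposition~\ref{thm:AP}(1) together with the boundedness hypothesis on $\phi_{t}$, while the latter comes from a standard inductive bootstrap on $\norm{e^{n}}$ under a mild smallness condition on $\tau$. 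Once these bounds are in hand, $\mn'/\sqrt{F_{1}}$ and its derivative are uniformly bounded on the relevant range. Cauchy--Schwarz and Young's inequality then absorb the $\tau\norm{w^{n+1/2}}$- and $|r^{n+1/2}|^{2}$-scale contributions into the dissipation on the left. Summing from $n=1$ to $k-1$ telescopes the energy differences, and a discrete Gronwall inequality applied to $\norm{\mg e^{n}}+(r^{n})^{2}$ yields the stated bound; the factor $\tau^{4}$ emerges from $\sum_{n}\tau\cdot\tau^{3}\int_{t^{n-1}}^{t^{n+1}}(\norm{\phi_{ttt}(s)}+|\mr_{ttt}(s)|^{2})\,ds$, with each instant of $[0,t^{k}]$ covered at most twice.
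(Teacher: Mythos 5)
Your proposal follows essentially the same route as the paper's proof: Taylor-expanded consistency equations with integral-remainder truncation errors, the same three inner-product pairings with $w^{n+1/2}$, $-(e^{n+1}-e^{n})$, and $2r^{n+1/2}$ mimicking the discrete energy law, a Lipschitz bound on the nonlinear mismatch in terms of $\norm{e^{n}}+\norm{e^{n-1}}$, and a final summation plus Gronwall. Your explicit tracking of the extrapolation defect and the inductive $L^{\infty}$ bootstrap are details the paper glosses over, but they do not change the argument.
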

\begin{proof}
	Let's subtract \eqref{eq:partial.SAV} from \eqref{eq:SAV.CN} at $t^{n+1/2}$  
	\begin{equation}
		\begin{aligned}
			e^{n+1} - e^{n} = -\tau w^{n+1/2} + T_{1}^{n+1/2},
		\end{aligned}
		\label{eq:SAV.CN.CH.err.pf1.a}
	\end{equation}
	\begin{equation}
		\begin{aligned}
			w^{n+1/2} = \mg^{2} e^{n+1/2} + \mr^{n+1/2} u^{n+1/2} - \mr(t^{n+1/2}) u(t^{n+1/2}),
		\end{aligned}
		\label{eq:SAV.CN.CH.err.pf1.b}
	\end{equation}
	\begin{equation}
		\begin{aligned}
			r^{n+1} - r^{n} & = \frac{1}{2} \inner{ u^{n+1/2}, \phi^{n+1}-\phi^{n} } 
			\\
			& \quad - \frac{1}{2} \inner{ u(t^{n+1/2}), \tau \phi_{t}(t^{n+1/2}) } + T_{2}^{n+1/2},
		\end{aligned}
		\label{eq:SAV.CN.CH.err.pf1.c}
	\end{equation}
	where the truncation errors are given by
		\begin{align}
			T_{1}^{n+1/2} & = \tau \phi_{t}(t^{n+1/2}) - \left( \phi(t^{n+1}) - \phi(t^{n}) \right), 
			\\
			T_{2}^{n+1/2} & = \tau \mr_{t}(t^{n+1/2}) - \left( \mr(t^{n+1}) - \mr(t^{n}) \right). 
		\end{align}
	With the Taylor expansion, the truncation errors can be rewritten as 
		\begin{align}
			T_{1}^{n+1/2} & = \frac{1}{2} \int_{t^{n+1}}^{t^{n+1/2}} (t^{n+1}-s)^{2} \phi_{ttt}(s) ds
				- \frac{1}{2} \int_{t^{n}}^{t^{n+1/2}} (t^{n}-s)^{2} \phi_{ttt}(s) ds,
			\\
			T_{2}^{n+1/2} & = \frac{1}{2} \int_{t^{n+1}}^{t^{n+1/2}} (t^{n+1}-s)^{2} \mr_{ttt}(s) ds
				- \frac{1}{2} \int_{t^{n}}^{t^{n+1/2}} (t^{n}-s)^{2} \mr_{ttt}(s) ds. 
		\end{align}

	Firstly, making the almost periodic inner product of \eqref{eq:SAV.CN.CH.err.pf1.a} with $w^{n+1/2}$ yields 
	\begin{equation}
		\inner{e^{n+1}-e^{n}, w^{n+1/2}} + \tau \norm{w^{n+1/2}} = \inner{T_{1}^{n+1/2}, w^{n+1/2}}.
		\label{eq:SAV.CN.CH.err.pf2.a}
	\end{equation}
	Then its right-hand term can be bounded by
	\begin{equation}
		\begin{aligned}
			& \inner{T_{1}^{n+1/2}, w^{n+1/2}}
			\leq \frac{\tau}{2} \norm{w^{n+1/2}} + \frac{C}{\tau} \norm{T_{1}^{n+1/2}}
			\\
			& \leq \frac{\tau}{2} \norm{w^{n+1/2}} 
				+ C \tau^{4} \int_{t^{n}}^{t^{n+1}} \norm{\phi_{ttt}(s)} ds.
		\end{aligned}
		\label{eq:SAV.CN.CH.err.pf3.a}
	\end{equation}

	Secondly, by taking the almost periodic inner products of \eqref{eq:SAV.CN.CH.err.pf1.b} with $-(e^{n+1}-e^{n})$, 
	we obtain 
	\begin{equation}
		\begin{aligned}
			& -\inner{w^{n+1/2}, e^{n+1}-e^{n}} = -\frac{1}{2} \left( \norm{\mg e^{n+1}} - \norm{\mg e^{n}} \right)
			\\
			& \quad \quad \quad \quad \quad 
			- \inner{\mr^{n+1/2}u^{n+1/2} - \mr(t^{n+1/2})u(t^{n+1/2}), e^{n+1}-e^{n}}. 
		\end{aligned}
		\label{eq:SAV.CN.CH.err.pf2.b}
	\end{equation}
	Without the minus sign, the second term on the right-hand side in the above equation 
	can be transformed into 
	\begin{equation}
		\begin{aligned}
			& \quad \inner{\mr^{n+1/2}u^{n+1/2} - \mr(t^{n+1/2})u(t^{n+1/2}), e^{n+1}-e^{n}}
			\\
			& = r^{n+1/2} \inner{u^{n+1/2}, e^{n+1}-e^{n}}
			\\
			& \quad 
			+ \mr(t^{n+1/2}) \inner{u^{n+1/2} - u(t^{n+1/2}), e^{n+1}-e^{n}}.
		\end{aligned}
		\label{eq:SAV.CN.CH.err.pf3.b.0}
	\end{equation}
	Note that $|\mr(t)| \leq C$ and 
	\begin{equation}
		\begin{aligned}
			\| u^{n+1/2} - u(t^{n+1/2}) \|_{AP}
			& \leq C \| \mn'(\bphi^{n+1/2}) - \mn'(\phi(t^{n+1/2})) \|_{AP}
			\\
			& \leq C \left( \|e^{n}\|_{AP} + \|e^{n-1}\|_{AP} \right). 
		\end{aligned}
		\label{eq:SAV.CN.CH.err.pf3.b1}
	\end{equation}
	The last term on the right-hand side of \eqref{eq:SAV.CN.CH.err.pf3.b.0} can be estimated by  
	\begin{equation}
		\begin{aligned}
			& \quad \mr(t^{n+1/2}) \inner{u^{n+1/2} - u(t^{n+1/2}), e^{n+1}-e^{n}}
			\\
			& = \mr(t^{n+1/2}) \inner{u^{n+1/2} - u(t^{n+1/2}), -\tau w^{n+1/2}+T_{1}^{n+1/2}}
			\\
			& \leq \frac{\tau}{2} \norm{w^{n+1/2}} + C \tau \norm{u^{n+1/2} - u(t^{n+1/2})}
			+ \frac{C}{\tau} \norm{T_{1}^{n+1/2}}
			\\
			& \leq \frac{\tau}{2} \norm{w^{n+1/2}} 
			+ C \tau \left( \norm{e^{n}} + \norm{e^{n-1}} \right)
			\\
			& \quad + C \tau^{4} \int_{t^{n}}^{t^{n+1}} \norm{\phi_{ttt}(s)} ds.
		\end{aligned}
		\label{eq:SAV.CN.CH.err.pf3.b}
	\end{equation}

	Thirdly, multiplying the both sides of \eqref{eq:SAV.CN.CH.err.pf1.c} by $2r^{n+1/2}$, we obtain 
	\begin{equation}
		\begin{aligned}
			& (r^{n+1})^{2} - (r^{n})^{2}
			= r^{n+1/2} \inner{u^{n+1/2}, \phi^{n+1}-\phi^{n}}
			\\
			& \quad \quad \quad \quad \quad 
			- r^{n+1/2} \inner{u(t^{n+1/2}), \tau \phi_{t}(t^{n+1/2})} + 2r^{n+1/2} T_{2}^{n+1/2}. 
		\end{aligned}
		\label{eq:SAV.CN.CH.err.pf2.c}
	\end{equation}
	The first two terms on the right-hand side of \eqref{eq:SAV.CN.CH.err.pf2.c} can be rewritten as 
	\begin{equation}
		\begin{aligned}
			& \quad r^{n+1/2} \inner{u^{n+1/2}, \phi^{n+1}-\phi^{n}}
			- r^{n+1/2} \inner{u(t^{n+1/2}), \tau \phi_{t}(t^{n+1/2})}
			\\
			& = r^{n+1/2} \inner{u^{n+1/2}, e^{n+1}-e^{n}}
			- r^{n+1/2} \inner{u(t^{n+1/2}), T_{1}^{n+1/2}}
			\\
			& \quad + r^{n+1/2} \inner{ u^{n+1/2} - u(t^{n+1/2}), \phi(t^{n+1})-\phi(t^{n}) }, 
		\end{aligned}
		\label{eq:SAV.CN.CH.err.pf3.c}
	\end{equation}
	where the last two terms on the right-hand side satisfy  
	\begin{equation}
		\begin{aligned}
			- r^{n+1/2} \inner{u(t^{n+1/2}), T_{1}^{n+1/2}}
			& \leq C \tau \left( (r^{n+1})^{2} + (r^{n})^{2} \right)  
			\\
			& \quad + C \tau^{4} \int_{t^{n}}^{t^{n+1}} \norm{\phi_{ttt}(s)} ds,
		\end{aligned}
	\end{equation}
	and
	\begin{equation}
		\begin{aligned}
			& \quad r^{n+1/2} \inner{ u^{n+1/2} - u(t^{n+1/2}), \phi(t^{n+1})-\phi(t^{n}) }
			\\
			& \leq C \tau \norm{\phi_{t}} \left( (r^{n+1/2})^{2} 
			+ \norm{ u^{n+1/2} - u(t^{n+1/2}) } \right)
			\\
			& \leq C \tau \left( (r^{n+1})^{2} + (r^{n})^{2} + \norm{e^{n}} + \norm{e^{n-1}} \right).
		\end{aligned}
	\end{equation}
	Therefore the last term on the right-hand side of \eqref{eq:SAV.CN.CH.err.pf2.c}
	can be bounded by 
	\begin{equation}
		\begin{aligned}
			2r^{n+1/2} T_{2}^{n+1/2} \leq C \tau \left( (r^{n+1})^{2} + (r^{n})^{2} \right) 
				+ C \tau^{4} \int_{t^{n}}^{t^{n+1}} \left| r_{ttt}(s) \right|^{2} ds.
		\end{aligned}
	\end{equation}
%
	With the above estimates, adding \eqref{eq:SAV.CN.CH.err.pf2.a}, \eqref{eq:SAV.CN.CH.err.pf2.b} 
	and \eqref{eq:SAV.CN.CH.err.pf2.c} together leads to 
	\begin{equation}
		\begin{aligned}
			& \quad \frac{1}{2} \left( \norm{\mg e^{n+1}} - \norm{\mg e^{n}} \right) + (r^{n+1})^{2} - (r^{n})^{2}
			\\
			& \leq C \tau \left( \norm{e^{n}} + \norm{e^{n-1}} + (r^{n+1})^{2} + (r^{n})^{2} \right)
			\\
			& \quad + C \tau^{4} \int_{t^{n}}^{t^{n+1}} \norm{\phi_{ttt}(s)} ds
			+ C \tau^{4} \int_{t^{n}}^{t^{n+1}} \left| r_{ttt}(s) \right|^{2} ds.
		\end{aligned}
	\end{equation}
	Then we obtain the desired conclusion by summing over $n$, $n =
	0,1,\cdots,k-1$, and using the Gronwall inequality. 
\end{proof}

\subsection{Spectral deferred correction (SDC) approach}
\label{subsec:SDC}

The SDC approach\,\cite{Dutt2000, Xu2019} is an efficient method to improve the numerical precision for an 
existing scheme using spectral collocation points. 
Firstly, we introduce the basic idea of the SDC method. 
Integrating both sides of the Eqn.\,\eqref{eq:partial} with respect to $t$, we have 
\begin{equation}
	\phi(t) = \phi(0) - \int_{0}^{t} \mw(\phi(\tau)) \,d\tau.
	\label{eq:SDC.integrate}
\end{equation}
Suppose the approximation solution $\phi_{[0]}(t)$ has been calculated by some
numerical schemes. 
Then we define the residual $R_{[0]}(t)$ and the error $\epsilon_{[0]}(t)$ as 
\begin{equation}
	R_{[0]}(t) = \phi(0) - \int_{0}^{t} \mw(\phi_{[0]}(\tau)) \,d\tau - \phi_{[0]}(t), 
	\label{eq:SDC.residual}
\end{equation}
\begin{equation}
	\epsilon_{[0]}(t) = \phi(t) - \phi_{[0]}(t). 
	\label{eq:SDC.error}
\end{equation}
Replacing \eqref{eq:SDC.error} into \eqref{eq:SDC.integrate} yields
\begin{equation}
	\phi(t) = \phi(0) - \int_{0}^{t} \mw(\phi_{[0]}(\tau) + \epsilon_{[0]}(\tau)) \,d\tau.
	\label{eq:SDC.integrate.phi}
\end{equation}
Then we insert \eqref{eq:SDC.integrate.phi} into \eqref{eq:SDC.error} and subtract \eqref{eq:SDC.residual} 
\begin{equation}
	\epsilon_{[0]}(t) - R_{[0]}(t) = - \int_{0}^{t} \mw(\phi_{[0]}(\tau) 
		+ \epsilon_{[0]}(\tau)) d\tau + \int_{0}^{t} \mw(\phi_{[0]}(\tau)) d\tau.
	\label{eq:SDC.error.residual}
\end{equation}
By taking the derivative of both sides of the above equation, we obtain 
\begin{equation}
	\frac{d\epsilon_{[0]}(t)}{dt} = - \mw(\phi_{[0]}(t) + \epsilon_{[0]}(t)) + \mw(\phi_{[0]}(t)) + \frac{dR_{[0]}(t)}{dt}.
	\label{eq:SDC.derivative}
\end{equation}

Then, we apply the SDC approach into the second-order SAV/CN scheme to improve the
numerical accuracy.  We denote this strategy as SAV/CN+SDC. 
To calculate the integral precisely, we adopt the following Chebyshev nodes, 
\begin{equation}
	t^{n} = \frac{T}{2} - \frac{T}{2} \cos\left( \frac{n\pi}{N_{T}} \right),
	~~ n = 0, 1, \cdots, N_{T}.
	\label{eq:SDC.tn}
\end{equation}
The time step size is $\tau^{n} = t^{n+1} - t^{n}$.
To obtain the approximation solution $\phi_{[0]}(t)$, we rewrite the SAV/CN scheme as
\begin{align}
	\frac{\phi_{[0]}^{n+1}-\phi_{[0]}^{n}}{\tau^{n}} & = - \mw_{[0]}^{n+1/2},
	\nonumber
	\\
	\mw_{[0]}^{n+1/2} & = \mg^2\phi_{[0]}^{n+1/2} 
		+ \frac{\mr_{[0]}^{n+1/2}}{\sqrt{F_{1}(\bphi_{[0]}^{n+1/2})}} \mn'(\bphi_{[0]}^{n+1/2}),		
	\label{eq:SDC.SAV.CN.phi.b}
	\\
	\frac{\mr_{[0]}^{n+1}-\mr_{[0]}^{n}}{\tau^{n}} 
	& = \inner{\frac{\mn'(\bphi_{[0]}^{n+1/2})}{2\sqrt{F_{1}(\bphi_{[0]}^{n+1/2})}}, 
		\frac{\phi_{[0]}^{n+1}-\phi_{[0]}^{n}}{\tau^{n}} },
	\nonumber
\end{align}
where $ \phi_{[0]}^{0} = \phi^{0} $, $\mr_{[0]}^{n+1/2} = (\mr_{[0]}^{n+1} + \mr_{[0]}^{n})/2$, 
$\phi_{[0]}^{n+1/2} = (\phi_{[0]}^{n+1} + \phi_{[0]}^{n})/2$ and 
$\bphi_{[0]}^{n+1/2} = (3\phi_{[0]}^{n} - \phi_{[0]}^{n-1})/2$.
We adopt a similar strategy to discretize \eqref{eq:SDC.derivative} as follows 
\begin{subequations}
	\begin{align}
		\frac{\epsilon_{[0]}^{n+1}-\epsilon_{[0]}^{n}}{\tau^{n}} 
		& = - \mw_{[0]}^{n+1/2,\epsilon} + \mw_{[0]}^{n+1/2}
		+ \frac{R_{[0]}^{n+1} - R_{[0]}^{n}}{\tau^{n}},
		\label{eq:SDC.SAV.CN.epsilon.a}
		\\
		\mw_{[0]}^{n+1/2,\epsilon} & = \mg^2 \phi_{[0]}^{n+1/2,\epsilon} 
		+ \frac{\mr_{[0]}^{n+1/2}}{\sqrt{F_{1}(\bphi_{[0]}^{n+1/2})}} \mn'(\bphi_{[0]}^{n+1/2,\epsilon}),
		\label{eq:SDC.SAV.CN.epsilon.b}
	\end{align}
	\label{eq:SDC.SAV.CN.epsilon}
\end{subequations}
where $\epsilon_{[0]}^{0} = \phi(0) - \phi_{[0]}^{0} = 0$, 
$\phi_{[0]}^{n+1/2,\epsilon} = \phi_{[0]}^{n+1/2} + \epsilon_{[0]}^{n+1/2}$, 
and $\bphi_{[0]}^{n+1/2,\epsilon} = \bphi_{[0]}^{n+1/2} + \bar{\epsilon}_{[0]}^{n+1/2}$. 
Substituting \eqref{eq:SDC.SAV.CN.phi.b} and \eqref{eq:SDC.SAV.CN.epsilon.b} into \eqref{eq:SDC.SAV.CN.epsilon.a} 
and eliminating the residual by \eqref{eq:SDC.residual}, we obtain the following linear solvable equation
\begin{equation}
	\begin{aligned}
		& \left( I + \frac{\tau^{n}}{2} \mg^{2} \right) \epsilon_{[0]}^{n+1} 
		= \left( I - \frac{\tau^{n}}{2} \mg^{2} \right) \epsilon_{[0]}^{n} 
		- \int_{t^{n}}^{t^{n+1}} \mw(\phi_{[0]}(\tau^{n})) d\tau^{n}   
		\\
		& \quad - \phi_{[0]}^{n+1} + \phi_{[0]}^{n} 
		- \tau^{n} \frac{\mr_{[0]}^{n+1/2}}{\sqrt{F_{1}(\bphi_{[0]}^{n+1/2})}} 
		\left\{ \mn'(\bphi_{[0]}^{n+1/2,\epsilon}) - \mn'(\bphi_{[0]}^{n+1/2}) \right\}.
	\end{aligned}
	\label{eq:SDC.CN.epsilon.update}
\end{equation}
A more accurate solution can be updated by 
\begin{equation}
	\phi_{[1]}^{n+1} = \phi_{[0]}^{n+1} + \epsilon_{[0]}^{n+1}.
	\label{eq:SDC.CN.phi.update}
\end{equation}

\subsection{The projection method (PM) discretization}
\label{subsec:PM}

The PM is an accurate approach in computing aperiodic structures that can 
avoid the Diophantine approximation error.
The PM is based on the fact that the $d$-dimensional aperiodic structure can be embedded into an 
$n$-dimensional periodic structure $(n\geq d)$. 
The dimensionality $n$ is determined by the spectrum structures of the aperiodic
structures. In particular, $n$ is the number of linearly independent vectors
over the rational number field which span the spectrum.  
In the PM, the $d$-dimensional order parameter $\phi(\mbr)$ can be expanded as 
\begin{equation}
    \phi(\mbr) = \sum_{\mbh\in\mathbb{Z}^{n}} \hphi(\mbh)
    e^{i[(\mathcal{P} \cdot \mathbf{Bh})^T \cdot \mbr]}, ~~~
	\mbr\in\mathbb{R}^d.
	\label{eq:Bohr.Fourier.4D}
\end{equation}
where $\mathbf{B} \in \mathbb{R}^{n\times n}$ is an invertible matrix related to
the $n$-dimensional primitive reciprocal lattice. 
The projection matrix $\mathcal{P} \in \mathbb{R}^{d\times n}$ depends on the property 
of aperiodic structures. 
If we consider $d$-dimensional periodic structures, the projection matrix degenerates 
to a $d$-order identity matrix. 
Therefore, the PM provides a unified framework in calculating periodic and aperiodic crystals.
More details about the PM can refer to \cite{Jiang2014}.
The Fourier coefficient $\hphi(\mbh)$ satisfies 
\begin{equation}
	X := \left\{(\hphi(\mbh))_{\mbh\in\mathbb{Z}^n}:
	\hphi(\mbh)\in\mathbb{C}, ~
	\sum_{\mbh\in\mathbb{Z}^n}|\hphi(\mbh)|<\infty \right\}.
\end{equation}
In practice, let $\mathbf{N}=(N_1, N_2, \dots, N_n)\in \mathbb{N}^n$, and 
\begin{equation} 
	X_{\mathbf{N}} := \{\hphi(\mbh)\in X: \hphi(\mbh) = 0, ~\mbox{for all}~ |h_j|> N_j/2, ~ j=1,2,\dots,n \}.  
\end{equation}
The number of elements in the set is $N=(N_1+1)(N_2+1) \cdots (N_n+1)$. 
Using the PM, the SAV/CN scheme of full discretization reads 
\begin{equation}
	\begin{aligned}
		\hphi^{n+1}(\mbh)-\hphi^{n}(\mbh) & = - \tau \widehat{\mw}^{nh}(\mbh),
		\\
		\widehat{\mw}^{nh}(\mbh) & = \frac{1}{2} 
			\prod_{j=1}^m [q_j^2 - (\mathcal{P}\mathbf{Bh})^{T} (\mathcal{P}\mathbf{Bh}) ]^{2} 
			[\hphi^{n+1}(\mbh)+\hphi^{n}(\mbh)] 
			\\
			& \quad + \frac{\mr^{n+1}+\mr^{n}}{2\sqrt{F_{1}^{nh}[\hat{\Phi}]}} \widehat{\mn'}^{nh}(\mbh),
		\\
		\mr^{n+1}-\mr^{n} & = 
			\sum_{\mbh_{1}+\mbh_{2}=\bm{0}} \frac{\widehat{\mn'}^{nh}(\mbh_{1})}{2\sqrt{F_{1}^{nh}[\hat{\Phi}]}} 
			[\hphi^{n+1}(\mbh_{2})-\hphi^{n}(\mbh_{2})],
	\end{aligned}
	\label{eq:SAV.CN.dis}
\end{equation}
where 
\begin{equation}
	\begin{aligned}
		\widehat{\mn'}^{nh}(\mbh) & = \tep\hphi^{nh}(\mbh) 
			- \tal \sum_{\mbh_{1}+\mbh_{2}=\mbh} \hphi^{nh}(\mbh_{1}) \hphi^{nh}(\mbh_{2})
			\\
			& \quad + \sum_{\mbh_{1}+\mbh_{2}+\mbh_{3}=\mbh} \hphi^{nh}(\mbh_{1}) 
			\hphi^{nh}(\mbh_{2}) \hphi^{nh}(\mbh_{3}),
		\\
		F_{1}^{nh}[\hat{\Phi}]
			& = \frac{\tep}{2} \sum_{\mbh_{1}+\mbh_{2}=\bm{0}} \hphi^{nh}(\mbh_{1}) \hphi^{nh}(\mbh_{2})
			- \frac{\tal}{3} \sum_{\mbh_{1}+\mbh_{2}+\mbh_{3}=\bm{0}} \hphi^{nh}(\mbh_{1}) 
			\hphi^{nh}(\mbh_{2}) \hphi^{nh}(\mbh_{3})
		\\
			& \quad + \frac{1}{4} \sum_{\mbh_{1}+\mbh_{2}+\mbh_{3}+\mbh_{4}=\bm{0}} \hphi^{nh}(\mbh_{1}) 
			\hphi^{nh}(\mbh_{2}) \hphi^{nh}(\mbh_{3}) \hphi^{nh}(\mbh_{4}) + C_{1},
		\\
		\hphi^{nh}(\mbh) & = \frac{3\hphi^{n+1}(\mbh) - \hphi^{n}(\mbh)}{2}. 
	\end{aligned}
\end{equation}
In the above equations, the nonlinear terms are $n$-dimensional convolutions in the Fourier space.
Directly computing them is extremely expensive. 
To avoid this, we use the pseudospectral method through the $n$-dimensional fast Fourier
transformation to compute them efficiently in the $n$-dimensional time domain by simple multiplication.
The mass conservation constraint \eqref{eq:mass.conserve} can be satisfied through 
\begin{equation}
	e_{1}^{T} \hat{\Phi} = 0,
\end{equation}
where $e_{1} = (1,0,\cdots,0)^{T} \in \mathbb{R}^{N}$.

\section{Numerical results}
\label{sec:results}

In this section, we present several numerical examples to verify the accuracy 
of the SAV/CN and SAV/CN+SDC schemes and to illustrate the advantages of the
higher-order scheme in dynamic evolution.  We also show the influence of
multiple length-scales on the thermodynamic stability of aperiodic structures. 

\subsection{Accuracy}
\label{subsec:results.efficiency}

In this subsection, we take the two characteristic length scale iPFC model in one-dimensional 
space to test the numerical accuracy of the SAV/CN and SAV/CN+SDC schemes.
The model parameters are set as $q_{1} = \sqrt{2}$, $q_{2} = \sqrt{3}$, $\tep = 10$ and $\tal = 4$. 
The computational domain is $[0,2\pi]$.
Correspondingly, the projection matrix $\mathcal{P}$ and $\mathbf{B}$ in the PM both
are $1$. The initial data is chosen as $\phi(x,0) = \sin(x)$. 
We check the temporal accuracy by taking the space discretization $N=128$. 
The numerical solution of $N_{T}=2048$ is set as the reference solution.
Tab.\,\ref{tab:rate.SDC} shows the errors and convergence rates of the SAV/CN and SAV/CN+SDC schemes at $T=0.2$.
One can observe that the numerical accuracy of the SAV/CN scheme is second-order and
can be improved to fourth-order by the SDC approach. 
\begin{table}[htbp]
    \centering
    \caption{Errors and convergence rates of the SAV/CN and SAV/CN+SDC schemes for the Allen-Cahn equation. 
		The numerical solution of $N_{T} = 2048$ is regarded as the reference value.}
	\label{tab:rate.SDC}
    \begin{tabular}{|*{6}{c|}}
        \hline
        \diagbox[width=9em]{Scheme}{$N_{T}$} & & 64 & 128 & 256 & 512 \\
        \hline
        \multirow{2}*{SAV/CN} & Error & 4.75E-3 & 1.17E-3 & 2.91E-4 & 7.17E-5  \\
        \cline{2-6}
        ~ & Rate & - & 2.01 & 2.02 & 2.07 \\
        \hline
        \multirow{2}*{SAV/CN + SDC} & Error & 1.16E-5 & 6.78E-7 & 4.04E-8 & 2.46E-9  \\
        \cline{2-6}
        ~ & Rate & - & 4.07 & 4.03 & 4.02 \\
        \hline
    \end{tabular}
\end{table}

\subsection{Dynamic evolution}
\label{subsec:evolution}

In this subsection, we simulate the dynamic process of $2$-dimensional
dodecagonal quasicrystal (DDQC) using the iPFC model with two length-scales. 
The model parameters are $q_{1} = 1$, $q_{2} = 2\cos(\pi/12)$, $\tep = -2$ and
$\tal = 2$. 
The initial value is the DDQC whose spectral distribution and real morphology 
are shown in Fig.\,\ref{fig:DDQC.Ini}. 
The big blue dot represents the origin and the others are the $24$ basic Fourier modes located on 
the circles of radii $q_{1}$ and $q_{2}$, respectively. 
\begin{figure}[htbp]
	\centering
	\includegraphics[scale=0.15]{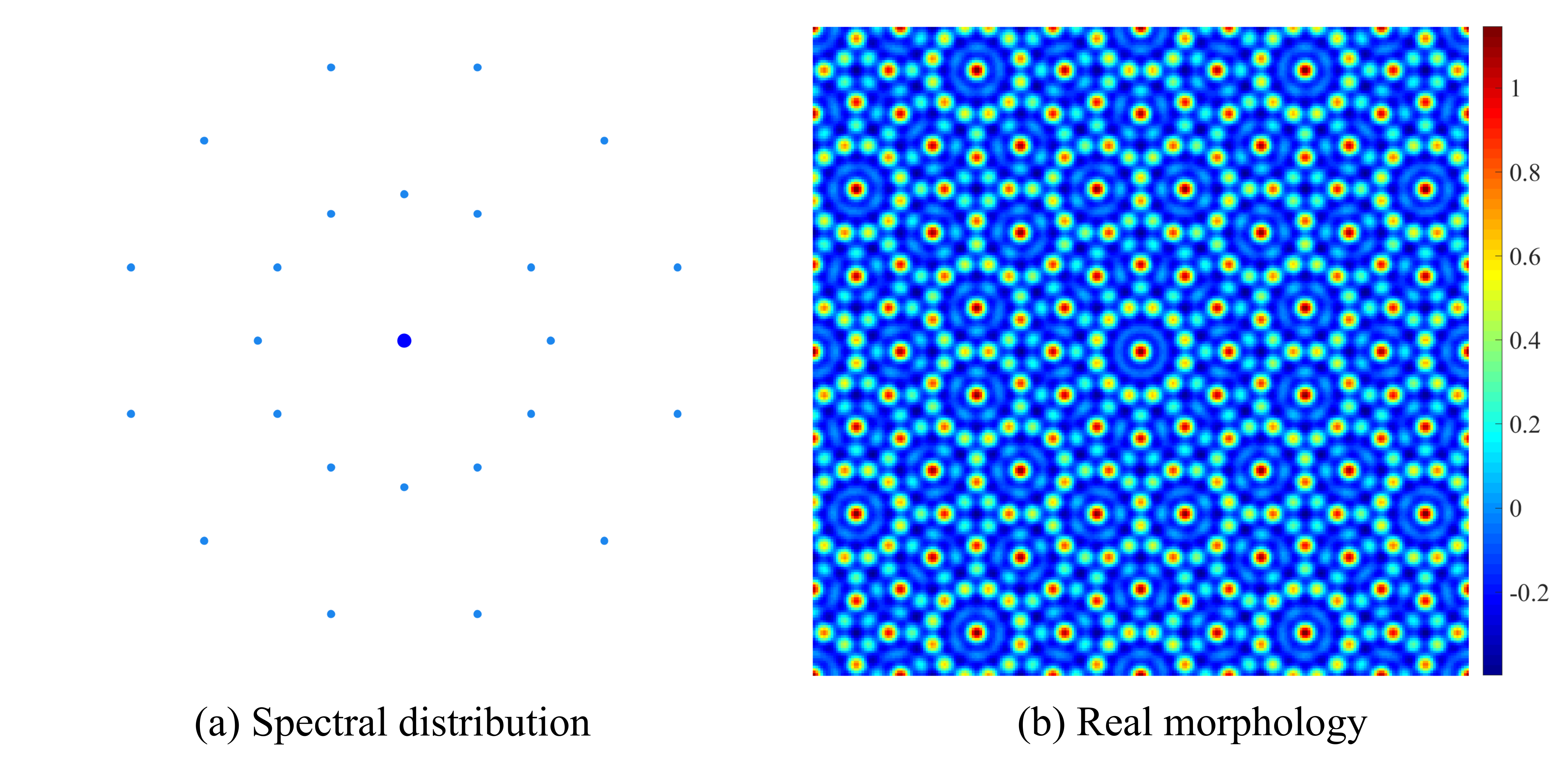}
	\caption{The spectral distribution and the corresponding real morphology of the initial value.}
	\label{fig:DDQC.Ini}
\end{figure}
When calculating the DDQC, we adopt the PM to discretize the spatial functions in 
$4$-dimensional space with $24^4$ trigonometric functions.
The $24^4$ basis functions bring a negligible spatial error comparing the temporal error. 
The projection matrix in the PM is 
\begin{equation}
	\mathcal{P} = \left( 
		\begin{array}{cccc} 
			1 & \cos(\pi/6) & \cos(\pi/3) & 0 \\ 
			0 & \sin(\pi/6) & \sin(\pi/3) & 1 
		\end{array} \right), 
\end{equation}
and the $\mathbf{B}$ is a 4-order identity matrix. 

We use the second-order SAV/CN scheme with $N_{T} = 256$ to simulate 
the dynamic evolution for the DDQC. 
Fig.\,\ref{fig:dynamic} shows the change tendency of the energy value. 
The corresponding morphologies at $t = 0, 50, 100, 150, 200$ are presented in
Fig.\,\ref{fig:dynamic.moph}.
As one can see, the proposed scheme satisfies the energy dissipation law. 
It should be noted that the value $C_1$ in the SAV approach plays an important
role in computational simulations. In our computation, the $C_1$ chosen as $10^{16}$
which maintains the consistency of original and modified energy values.
\begin{figure}[htbp]
    \centering
    \includegraphics[scale=0.20]{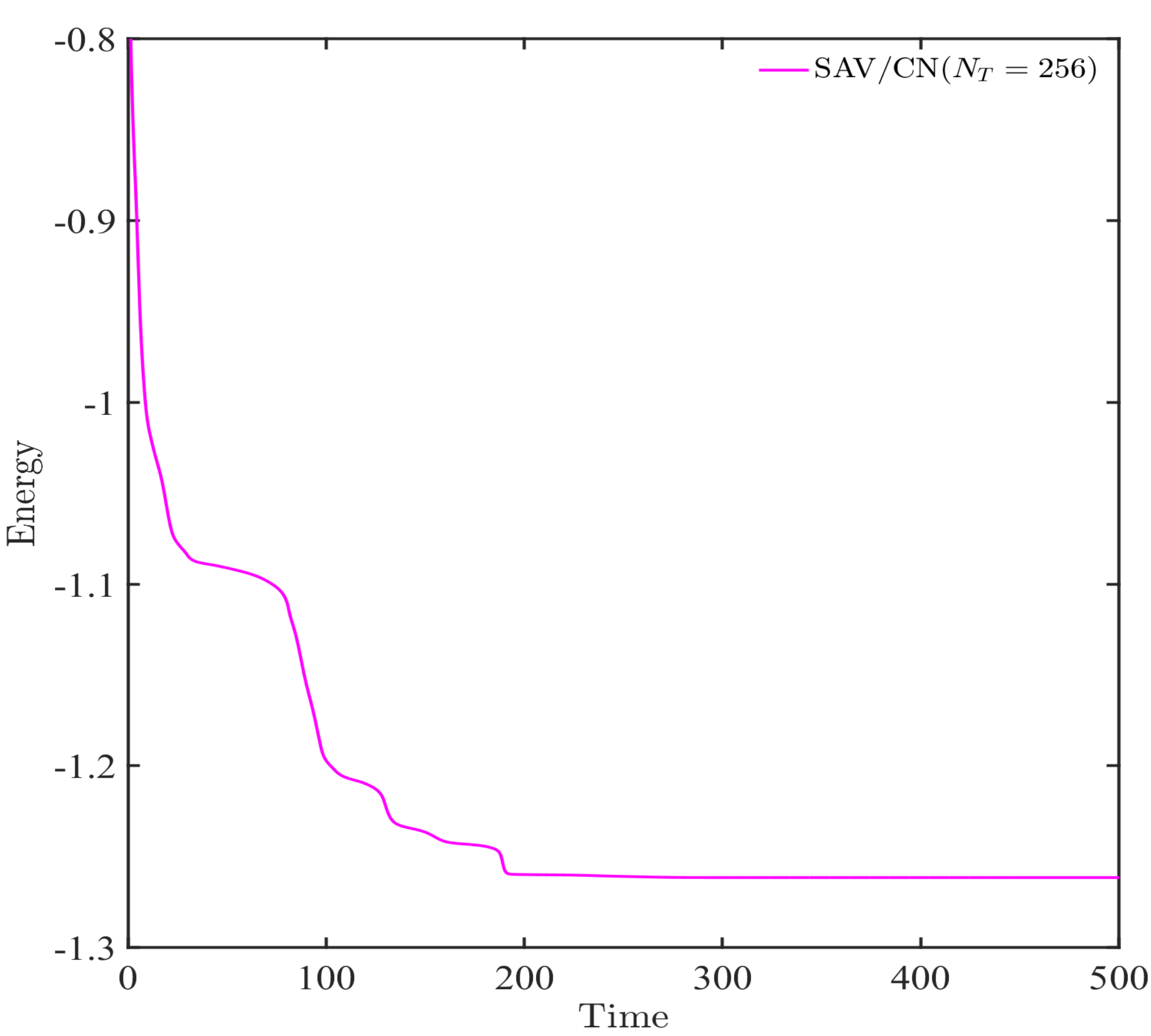}
    \caption{The time evolution of energy which is obtained by the SAV/CN scheme in the case of $N_{T}=256$.
		The model parameters are $q_{1} = 1$, $q_{2} = 2\cos(\pi/12)$, $\tep = -2$ and $\tal = 2$.}
	\label{fig:dynamic}
\end{figure}
\begin{figure}[htbp]
	\centering
	\includegraphics[scale=0.15]{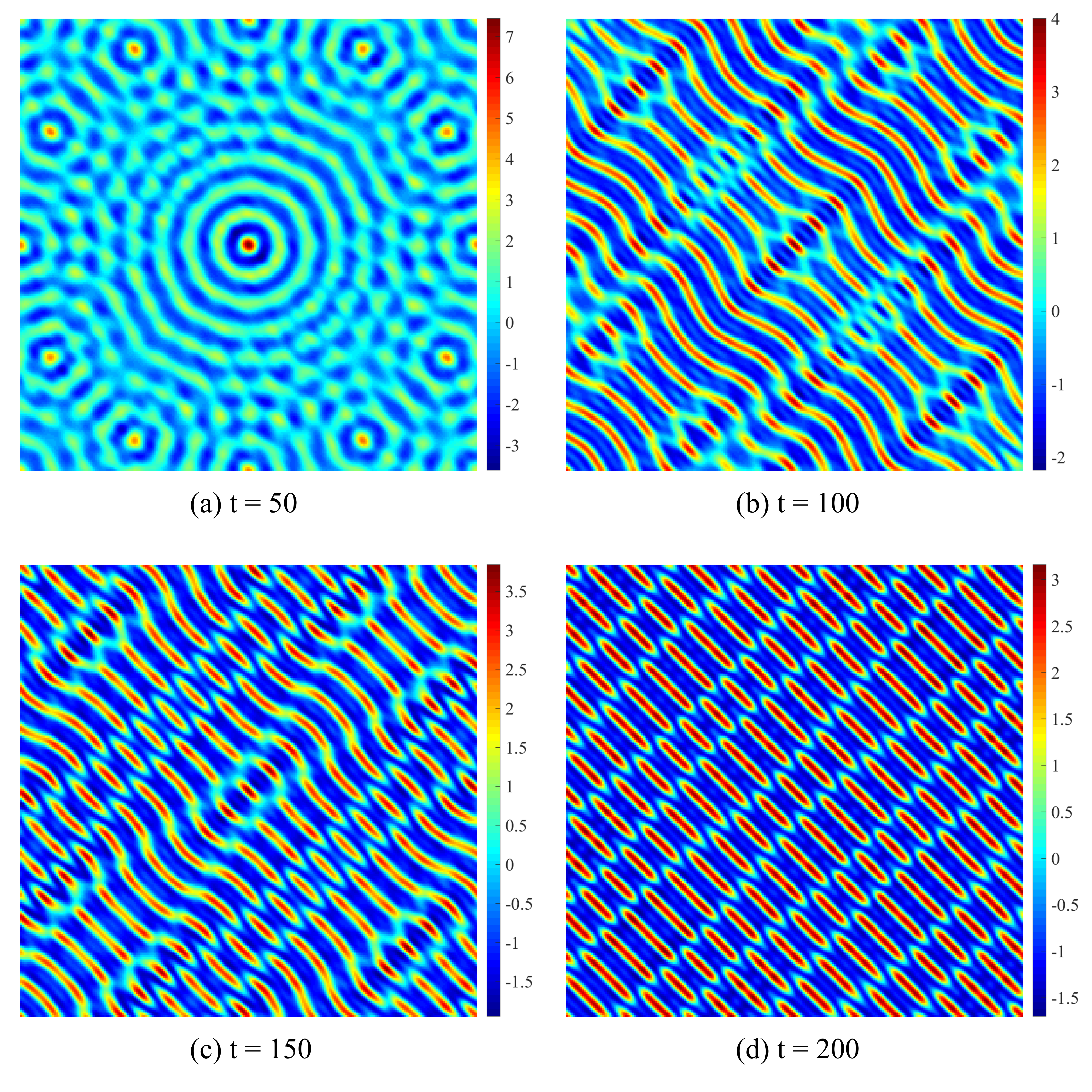}
	\caption{The morphologies of dynamic evolution in Fig.\,\ref{fig:dynamic}.
		Snapshots are taken at $t = 50, ~ 100, ~ 150, ~ 200$, respectively.}
	\label{fig:dynamic.moph}
\end{figure}

To show the role of the high-order methods in dynamic evolution, we give the reference energy $E_{s}$ which is 
calculated by the SAV/CN+SDC scheme in the case of $N_{T}=2048$.
Using the reference value as the baseline, Fig.\,\ref{fig:energy.diff} shows the energy difference of the 
SAV/CN scheme with $N_{T}=64,~128,~256$ and SAV/CN+SDC method with $N_T=32$. 
With the increase of the temporal discretization $N_{T}$, the energy difference decreases. 
However, the energy value obtained by the fourth-order SAV/CN+SDC scheme with
$N_{T}=32$ is closer to the reference value than that of the SAV/CN scheme. 
It is demonstrated that the higher-order method shows more accurate results with
less time discretization points in the dynamic simulation. 
\begin{figure}[htbp]
    \centering
    \includegraphics[scale=0.20]{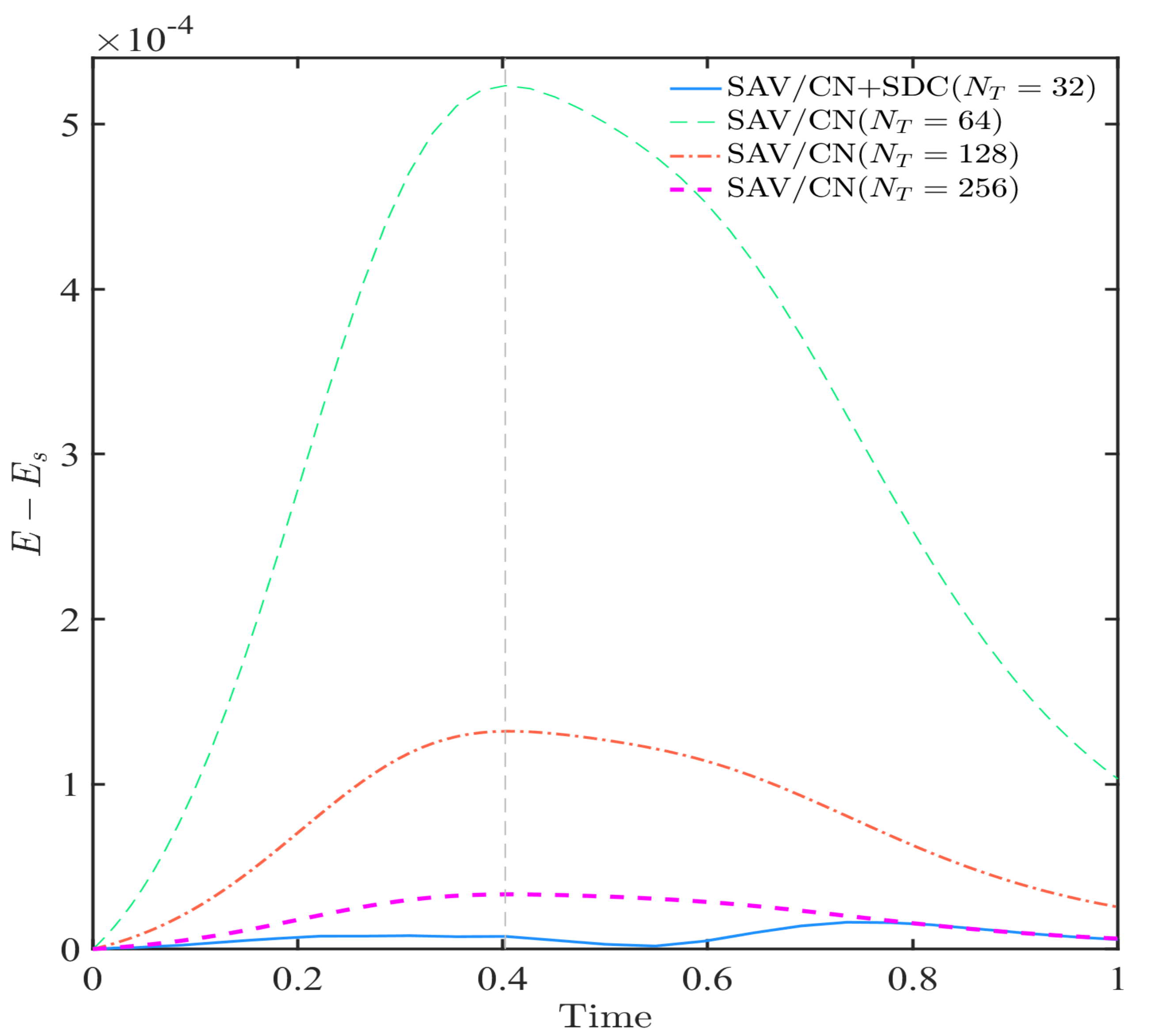}
		\caption{The difference between the numerical energy values and the reference value $E_{s}$. 
		The numerical energy values are computed by the two methods: SAV/CN and SAV/CN+SDC. 
    	The reference energy value is obtained by the SAV/CN+SDC method in the case of $N_{T}=2048$.
		The model parameters are $q_{1} = 1$, $q_{2} = 2\cos(\pi/12)$, $\tep = -2$ and $\tal = 2$.}
	\label{fig:energy.diff}
\end{figure}
We also give the morphologies of the crucial moment $t = 0.4025$ in Fig.\,\ref{fig:energy.diff.moph}. 
The morphology of the reference solution is also set as a reference value to clearly illustrate the differences. 
And the conclusions from these results are consistent with the energy evolution curves. 
\begin{figure}[htbp]
	\centering
	\includegraphics[scale=0.15]{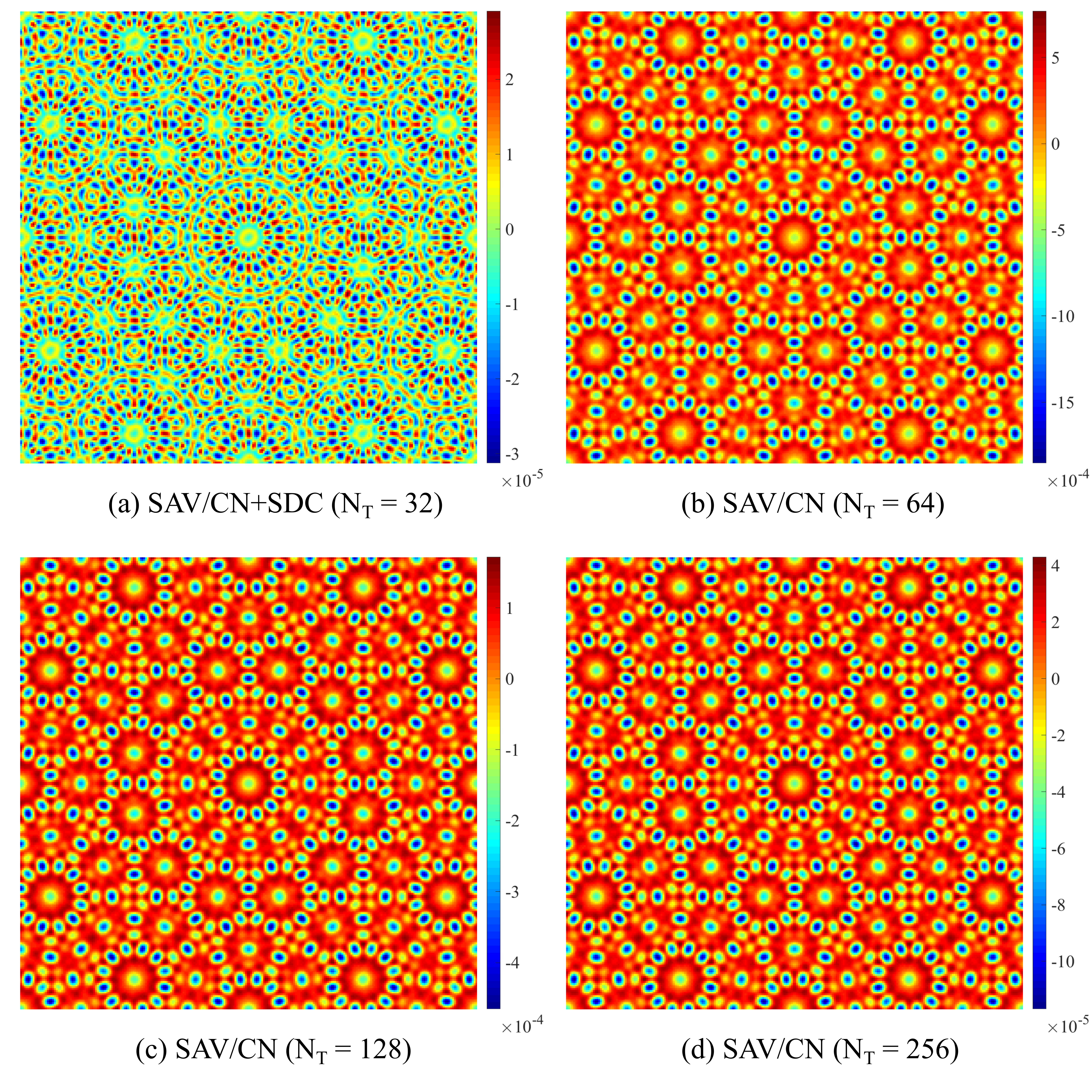}
	\caption{The real morphologies which describe the difference between the numerical solutions
		and the reference value at $t=0.4025$. 
		The reference solution is obtained by the SDC/CN+SDC method in the case of $N_{T}=2048$. 
		The model parameters are set as $q_{1} = 1$, $q_{2} = 2\cos(\pi/12)$, $\tep = -2$ and $\tal = 2$.}
	\label{fig:energy.diff.moph}
\end{figure}

\subsection{The influence of multiple length-scales potential}
\label{subsec:scales}

In this subsection, we use the dodecagonal quasiperiodic phases as an example to 
investigate the influence of multi-length-scale potentials on the stability of aperiodic structures. 
Concretely, we consider three, four, and five multiple length scale iPFC models. 
The parameters in the potential function are set as $q_{j}=s^{j-1}$, $s=2\cos(\pi/12)$,
$j=1,\cdots,m$, $m=3,4,5$.
The other model parameters are $\tep = -2$ and $\tal = 2$.
In the PM, the $\mathcal{P}$ and $\mathbf{B}$ are consistent with Subsec.\,\ref{subsec:evolution}. 
The spatial functions are also discretized in $4$-dimensional space with $24^4$ basis functions.
The SAV/CN approach with $N_{T}=256$ is adopted to simulate the dynamic process. 
Fig.\,\ref{fig:scales} lists the corresponding energy evolution plots, initial values, and stationary states.
The first row presents the energy evolutions of DDQCs with different length-scale potentials. 
The second and third rows give the spectral distributions and real morphologies
of the $m$-length-scale DDQCs, respectively. 
The last row shows the real morphologies of stationary solutions.
From these results, one can see that the three- and four-length-scale potentials
both result in 6-fold symmetric periodic crystal, while the five-length-scale potential
can obtain the 12-fold symmetric quasicrystals. 
Therefore increasing the number of characteristic length-scales in the iPFC model is helpful 
to stabilize the quasicrystals.

\begin{figure}[htbp]
	\centering
	\includegraphics[scale=0.12]{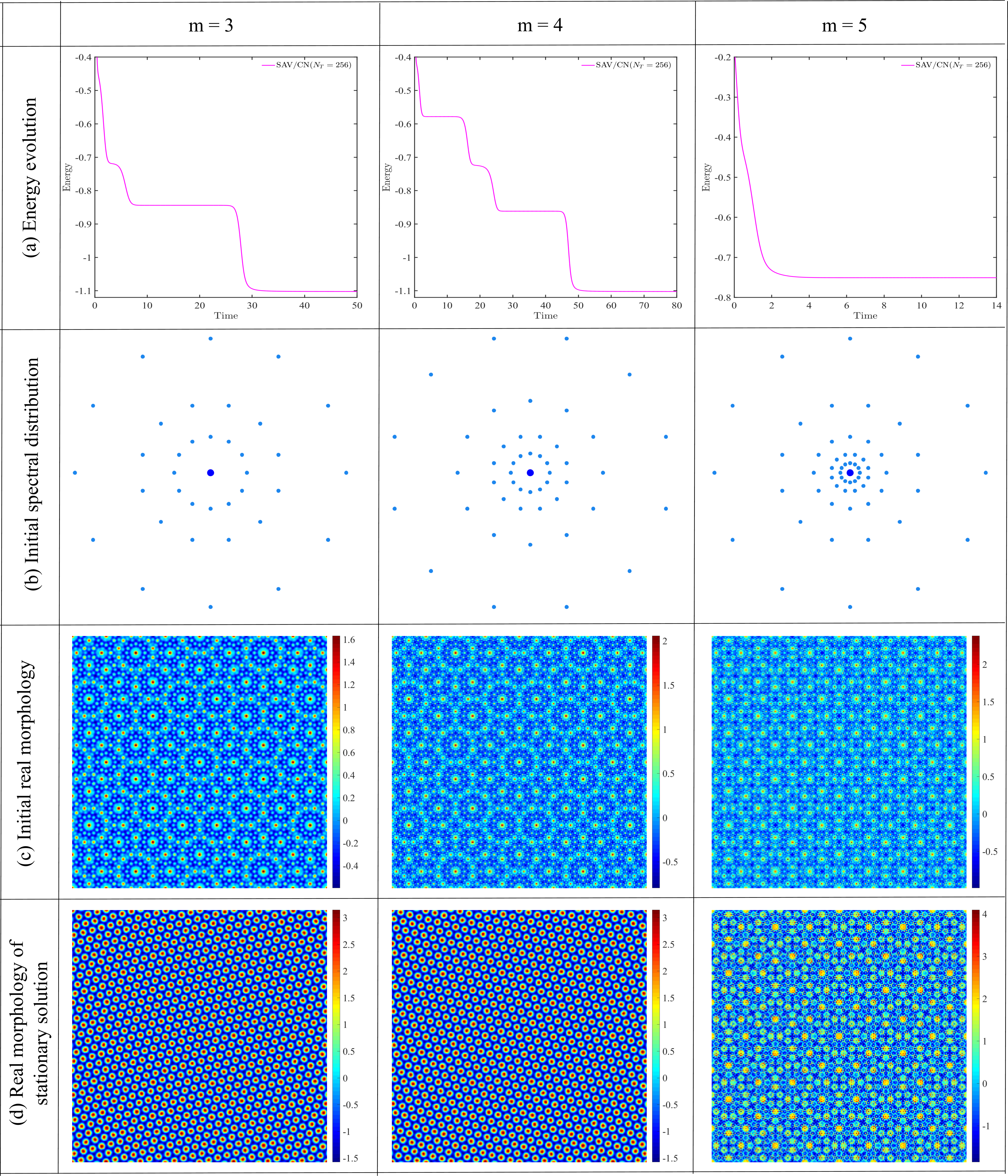}
	\caption{The energy evolutions, initial values and convergence solutions of $m$-length-scale DDQCs under the model 
		parameters $\tep = -2$, $\tal = 2$. 
		The scale parameters are set as $q_{j} = s^{j-1}$, $j=1,\cdots,m$, $s=2\cos(\pi/12)$. }
	\label{fig:scales}
\end{figure}

\section{Summary}
\label{sec:summary}

For the iPFC model, we proposed a second-order SAV/CN scheme which is unconditionally energy stable in the almost 
periodic function sense and gave the error estimate. 
Meanwhile, we used the SDC approach to further improve the accuracy of the
second-order scheme to the fourth-order method through a one-step correction.
The PM was applied to discretize spatial functions for computing aperiodic structures to high accuracy.
In numerical simulations, the efficiency of the numerical schemes was demonstrated via the numerical convergence 
rates and the comparison of the dynamic evolutions. 
By comparing the dynamic evolutions of the DDQCs with different length-scales,
we found that increasing the number of characteristic length-scales in the
iPFC model significantly is helpful to stabilize aperiodic structures.





\end{document}